\documentclass[12pt]{amsart}

\usepackage{graphics, amsmath, amsfonts, amssymb, amscd, mathrsfs}

\textwidth = 156mm
\textheight = 238mm
\evensidemargin=0mm
\oddsidemargin=0mm
\hoffset=4mm
\voffset=-23mm
\parskip =0.5mm
\parindent = 6mm
\linespread{1.05}
\pagestyle{plain}

\newtheorem{theorem}{Theorem} 
\newtheorem{lemma}[theorem]{Lemma}
\newtheorem{corollary}[theorem]{Corollary}
\newtheorem{proposition}[theorem]{Proposition}

\theoremstyle{definition}
\newtheorem{definition}[theorem]{Definition}

\newcommand{\C}{\mathbb{C}}
\newcommand{\N}{\mathbb{N}}
\renewcommand{\ker}{\operatorname{Ker}}
\renewcommand{\Im}{\operatorname{Im}}

\title{Deformations of Oka manifolds}

\author{Finnur L\'arusson}
\address{School of Mathematical Sciences, University of Adelaide, Adelaide SA 5005, Australia.} 
\email{finnur.larusson@adelaide.edu.au}

\subjclass[2010]{Primary 32G05.  Secondary 32E10, 32Q28.}

\keywords{Oka manifold, convex approximation property, Oka map, deformation.}

\date{First version 23 December 2010.  Most recent changes 24 June 2011.}

\begin{document}

\begin{abstract}
We investigate the behaviour of the Oka property with respect to deformations of compact complex manifolds.  We show that in a family of compact complex manifolds, the set of Oka fibres corresponds to a $G_\delta$ subset of the base.  We give a necessary and sufficient condition for the limit fibre of a sequence of Oka fibres to be Oka in terms of a new uniform Oka property.  We show that if the fibres are tori, then the projection is an Oka map.  Finally, we consider holomorphic submersions with noncompact fibres.
\end{abstract}

\maketitle
\tableofcontents

\section{Introduction}

\noindent
The class of Oka manifolds has emerged from the modern theory of the Oka principle, initiated in 1989 in a seminal paper of Gromov \cite{Gromov}.  They were first formally defined by Forstneri\v c in 2009 in the wake of his result that some dozen possible definitions are all equivalent \cite{Forstneric1}.  The Oka property can be seen as an answer to the question: what should it mean for a complex manifold to be \lq\lq anti-hyperbolic\rq\rq?  For more background, see the survey \cite{Forstneric-Larusson}.  One of the many open problems in Oka theory is to clarify the place of Oka manifolds in the classification theory of compact complex manifolds, surfaces in particular.  To address this problem, we need to understand how the Oka property behaves with respect to deformations of compact complex manifolds.  In this paper, we take some first steps in this direction.

Let $\pi:X\to B$ be a family of compact complex manifolds, that is, a proper holomorphic submersion, and therefore a smooth fibre bundle, from a complex manifold $X$ onto a complex manifold $B$.  Write $X_t$ for the compact complex manifold $\pi^{-1}(t)$, $t\in B$.  We wish to say as much as possible about the structure of the set $O_\pi$ of $t\in B$ for which $X_t$ is Oka.

It is well known that the set of $t\in B$ for which $X_t$ is hyperbolic is open (\cite{Kobayashi}, Thm.\ 3.11.1).  Thus we might expect $O_\pi$ to be closed.  It is not clear that this is a reasonable conjecture.  We will prove two weaker results.  We show that $O_\pi$ is $G_\delta$ (Corollary \ref{c:gdelta}).  We also prove (Corollary \ref{c:passing-to-limit}) that if $t_n\to s$ in $B$ and $X_{t_n}$ is Oka for every $n\in\N$, then $X_s$ is Oka if (and in fact only if) the family $\{X_{t_n}:n\in\N\}$ is uniformly Oka in a sense introduced here (Definition \ref{d:uniformlyoka}).

As well as asking about Oka properties of the fibres of the projection $\pi$, we can ask about Oka properties of $\pi$ itself as a map.  It is an open question whether $\pi$ is an Oka map if all its fibres are Oka.  We show that the answer is affirmative if the fibres are tori (Theorem \ref{t:tori}).

Finally, we indicate how our results in Sections 2 and 3 can be extended to holomorphic submersions whose fibres are not necessarily compact.  We point out that every $G_\delta$ subset of a complex manifold $B$ can be realised as $O_\pi$ for some holomorphic submersion $\pi:X\to B$ with noncompact fibres.

\smallskip
\noindent
{\it Acknowledgement.}  I am grateful to Franc Forstneri\v c for helpful discussions.

\section{The convex approximation property for compact manifolds}

\noindent
We will formulate the Oka property of a complex manifold $Y$ as the convex approximation property:  $Y$ is Oka if and only if for every $k\geq 1$, every holomorphic map to $Y$ from a compact convex subset $K$ of $\C^k$ can be uniformly approximated on $K$ by holomorphic maps $\C^k\to Y$.  (By a holomorphic map on a compact set we always mean a holomorphic map on an open neighbourhood of the set.)

Let $Y$ be compact.  We fix a Hermitian metric $\omega$ on $Y$.  We use it to filter sets of holomorphic maps by normal families.  For our purposes, the choice of filtration seems immaterial.  To get a quantitative handle on the Oka property of $Y$, we introduce the following definition.

\begin{definition}
\label{d:sigma}
By a {\it quintuple} we shall mean a quintuple $(K, U, V, r, \epsilon)$, where $K$ is a nonempty compact subset of $\C^k$, $k\geq 1$, $U\subset V\Subset\C^k$ are open neighbourhoods of $K$, $r>0$, and $\epsilon>0$.  Note that $U$ and $V$ are assumed to be relatively compact in $\C^k$.  For every quintuple $(K, U, V, r, \epsilon)$, let
\[ \sigma(K, U, V, r, \epsilon)(Y)=\sup_{\substack{f:U\to Y \textrm{ hol.} \\ \lVert f^*\omega\rVert_U\leq r}} \inf_{\substack{g:V\to Y \textrm{ hol.} \\ d(f,g)<\epsilon \textrm{ on }K}} \lVert g^*\omega \rVert_V\in[0,\infty]. \]
\end{definition}

Here, $\lVert\cdot\rVert$ denotes the supremum norm with respect to the Euclidean metric on $\C^k$, and the distance $d(f,g)$ is with respect to $\omega$.  In the proof of Theorem \ref{t:semicontinuity1} below, it is important to have a weak inequality in $\lVert f^*\omega\rVert_U\leq r$ and a strict inequality in $d(f,g)<\epsilon$.  We take the infimum of the empty set to be $\infty$.

Clearly, $\sigma(K, U, V, r, \epsilon)(Y)$ increases as $r$ increases, $\epsilon$ decreases, $U$ shrinks, and $V$ expands.  Also, $\sigma(K, U, V, r, \epsilon)(Y)$ is finite if and only if there is $R>0$ such that every holomorphic map $f:U\to Y$ with $\lVert f^*\omega\rVert_U\leq r$ can be approximated to within $\epsilon$ on $K$ by a holomorphic map $g:V\to Y$ with $ \lVert g^*\omega \rVert_V\leq R$.  Since $Y$ is compact, whether $\sigma(K, U, V, r, \epsilon)(Y)$ is finite for all $r$ and $\epsilon$ with $K$, $U$, and $V$ fixed is independent of the choice of a Hermitian metric on $Y$.

\begin{proposition} 
\label{p:newoka}
The compact manifold $Y$ is Oka if and only if $\sigma(K, U, V, r, \epsilon)(Y)$ is finite for every quintuple $(K, U, V, r, \epsilon)$ such that $K$ is convex.
\end{proposition}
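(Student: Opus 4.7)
My plan is to prove the two implications separately. The necessity would follow from a normal-families argument exploiting the compactness of $Y$, while the sufficiency would come from an iterative construction on an exhaustion of $\C^k$ by compact convex sets.

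For necessity, I would assume $Y$ is Oka, fix a quintuple $(K,U,V,r,\epsilon)$ with $K$ convex, and argue by contradiction: if $\sigma(K,U,V,r,\epsilon)(Y)=\infty$, there is a sequence of holomorphic maps $f_n:U\to Y$ with $\lVert f_n^*\omega\rVert_U\leq r$ for which the inner infimum grows without bound. The bound on $\lVert f_n^*\omega\rVert_U$ controls the operator norm of $df_n$, and since $Y$ is compact the family $\{f_n\}$ is equicontinuous with image in a fixed compact set, so Arzel\`a--Ascoli yields a subsequence converging uniformly on compact subsets of $U$ to a holomorphic $f:U\to Y$. I would then apply the convex approximation property to $f$ on the neighbourhood $U$ of $K$ to produce an entire $F:\C^k\to Y$ with $d(f,F)<\epsilon/2$ on $K$; since $V\Subset\C^k$, the quantity $R:=\lVert F^*\omega\rVert_V$ is finite. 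For $n$ sufficiently large $d(f_n,F)<\epsilon$ on $K$, so $F|_V$ is a competitor in the infimum for $f_n$ with value at most $R$, contradicting the choice of $f_n$.

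For sufficiency, I would take $f$ holomorphic on an open neighbourhood $U$ of a compact convex $K\subset\C^k$ and fix $\epsilon>0$. Choose an exhaustion of $\C^k$ by closed balls $K_1\subset K_2\subset\cdots$ with $K\subset K_1$, together with relatively compact open neighbourhoods $U_n$ of $K_n$ satisfying $U_n\subset U_{n+1}$; after shrinking $U$ one may assume $U\subset U_1$ and $r_0:=\lVert f^*\omega\rVert_U<\infty$. The hypothesis applied to $(K,U,U_1,r_0,\epsilon/2)$ produces $g_1:U_1\to Y$ with $d(f,g_1)<\epsilon/2$ on $K$ and $\lVert g_1^*\omega\rVert_{U_1}\leq R_1$ for some finite $R_1$. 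Iterating, I would apply $\sigma$ to the quintuple $(K_n,U_n,U_{n+1},R_n,\epsilon/2^{n+1})$ to produce $g_{n+1}:U_{n+1}\to Y$ with $d(g_n,g_{n+1})<\epsilon/2^{n+1}$ on $K_n$ and $\lVert g_{n+1}^*\omega\rVert_{U_{n+1}}\leq R_{n+1}<\infty$. A telescoping estimate shows $\{g_n\}$ is uniformly Cauchy on every compact subset of $\C^k$, so it converges to a holomorphic $F:\C^k\to Y$ with $d(f,F)\leq d(f,g_1)+\sum_{n\geq1}\epsilon/2^{n+1}<\epsilon$ on $K$, which is exactly CAP.

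The main technical obstacle would be the normal-family step in the necessity direction: one must verify that the sup-norm bound on $f_n^*\omega$ really provides the equicontinuity needed for Arzel\`a--Ascoli, which uses the compactness of $Y$ and the comparability of $\omega$ with any background Riemannian metric on it. Once this is in hand, the sufficiency direction is essentially bookkeeping: the key observation is that each quintuple-approximant comes with a finite derivative bound that can be fed into the next quintuple, allowing the iteration to converge.
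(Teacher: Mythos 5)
Your proposal is correct, and your necessity argument is essentially identical to the paper's proof of the forward implication: a normal-families limit $f$ of the sequence $f_n$ (using the bound $\lVert f_n^*\omega\rVert_U\leq r$ and compactness of $Y$), followed by applying CAP to $f$ with tolerance $\epsilon/2$ and noting that the resulting entire map has finite $\lVert\cdot^*\omega\rVert_V$ because $V\Subset\C^k$. The paper dismisses the converse as ``easy'' without writing it out; your iterative exhaustion by convex balls $K_n$, feeding each finite derivative bound $R_n$ into the next quintuple and telescoping the errors $\epsilon/2^{n+1}$, is a correct way to supply that direction.
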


\begin{proof}  $\Leftarrow$  This is easy (and does not require compactness of $Y$).  

$\Rightarrow$  Suppose $\sigma(K, U, V, r, \epsilon)(Y)=\infty$ for some quintuple $(K, U, V, r, \epsilon)$ with $K$ convex.  This means that for every $n\in\N$, there is a holomorphic map $f_n:U\to Y$ with $\lVert f_n^*\omega\rVert_U\leq r$, such that every holomorphic map $g:V\to Y$ with $d(f_n,g)<\epsilon$ on $K$ (there may be none) has $\lVert g^*\omega \rVert_V>n$.  Since the family $\{f_n:n\in\N\}$ is equicontinuous, by passing to a subsequence, we may assume that $(f_n)$ converges locally uniformly on $U$ to a holomorphic map $f:U\to Y$.  Find $n_0$ such that $d(f,f_n)<\epsilon/2$ on $K$ for all $n\geq n_0$.  If $Y$ was Oka, we could find a holomorphic map $g:\C^n\to Y$ with $d(f,g)<\epsilon/2$ on $K$.  Then $d(f_n,g)<\epsilon$ on $K$ for $n\geq n_0$, and $\lVert g^*\omega \rVert_V\leq n$ for $n$ large enough, which is a contradiction.
\end{proof}

We now slightly modify Definition \ref{d:sigma}.

\begin{definition}
\label{d:tau}
For every quintuple $(K, U, V, r, \epsilon)$, let
\[ \tau(K, U, V, r, \epsilon)(Y)=\sup_{\substack{f:U\to Y \textrm{ hol.} \\ \lVert f^*\omega\rVert_U<r}} \inf_{\substack{g:V\to Y \textrm{ hol.} \\ d(f,g)\leq\epsilon \textrm{ on }K}} \lVert g^*\omega \rVert_V\in[0,\infty]. \]
\end{definition}

Note that the weak inequality in Definition \ref{d:sigma} has become strong and vice versa.  This will be important in the proof of Theorem \ref{t:semicontinuity2}.  Clearly,
\[ \tau(K, U, V, r, \epsilon)(Y)\leq \sigma(K, U, V, r, \epsilon)(Y). \]
The following result is proved in the same way as Proposition \ref{p:newoka}.

\begin{proposition} 
The compact manifold $Y$ is Oka if and only if $\tau(K, U, V, r, \epsilon)(Y)$ is finite for every quintuple $(K, U, V, r, \epsilon)$ such that $K$ is convex.
\end{proposition}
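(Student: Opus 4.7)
The plan is to mimic Proposition~\ref{p:newoka}, the only novelty being the swap of weak and strict inequalities in the definition of $\tau$. The forward direction ($Y$ Oka implies $\tau$ finite) is a verbatim adaptation of the contradiction argument in the hard direction of Proposition~\ref{p:newoka}; the reverse direction is a brief CAP iteration.

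For the forward direction I suppose $\tau(K,U,V,r,\epsilon)(Y)=\infty$ for some quintuple with $K$ convex, and for each $n$ select $f_n:U\to Y$ holomorphic with $\lVert f_n^*\omega\rVert_U<r$ whose $\epsilon$-approximants $g:V\to Y$ all satisfy $\lVert g^*\omega\rVert_V>n$. The derivative bound yields equicontinuity of $(f_n)$ on compacts of $U$, and compactness of $Y$ combined with Arzel\`a-Ascoli produces a subsequence converging locally uniformly on $U$ to a holomorphic $f:U\to Y$. If $Y$ is Oka, CAP supplies an entire $g:\C^k\to Y$ with $d(f,g)<\epsilon/2$ on $K$; for large $n$, $d(f_n,f)<\epsilon/2$ on $K$ yields $d(f_n,g)<\epsilon$, in particular $\leq\epsilon$, which is exactly what $\tau$ demands. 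Hence $\lVert g^*\omega\rVert_V>n$ eventually, contradicting finiteness of $\lVert g^*\omega\rVert_V$. The passage from the strict inequality provided by the triangle inequality to the weak inequality used in $\tau$ is precisely what the new definition accommodates.

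For the reverse direction, given a compact convex $K\subset\C^k$ and a holomorphic $f:K\to Y$, I iterate: extend $f$ to $f_0$ on an open neighbourhood $U_0$ of $K$, fix an exhaustion of $\C^k$ by compact convex sets $L_0=K\subset L_1\subset\cdots$ together with open neighbourhoods $U_n\supset L_n$ satisfying $U_n\Subset U_{n+1}$, and at step $n$ apply finiteness of $\tau(L_{n-1},U_{n-1}',U_n,r_{n-1},\delta_n)(Y)$ with $U_{n-1}'\Subset U_{n-1}$ a neighbourhood of $L_{n-1}$ and $r_{n-1}>\lVert f_{n-1}^*\omega\rVert_{U_{n-1}'}$, slightly inflating the derivative bound to respect the strict inequality built into $\tau$. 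This produces $f_n:U_n\to Y$ holomorphic with $d(f_{n-1},f_n)\leq\delta_n$ on $L_{n-1}$. Choosing $(\delta_n)$ summable and small makes $(f_n)$ Cauchy on each $L_k$, hence locally uniformly convergent on $\C^k$ to an entire $g:\C^k\to Y$ approximating $f$ on $K$ within any prescribed tolerance, verifying CAP. The only subtlety throughout is the bookkeeping of strict versus weak inequalities, and I do not anticipate any deeper obstacle.
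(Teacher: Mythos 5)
Your proof is correct and follows essentially the same route as the paper, which simply notes that the result ``is proved in the same way as Proposition \ref{p:newoka}'': the forward direction is the same normal-families contradiction argument, with the observation that a strict $\epsilon/2+\epsilon/2$ bound is in particular $\leq\epsilon$, and the reverse direction is the standard easy CAP iteration. Your explicit bookkeeping of the strict inequality in the supremum (inflating $r_{n-1}$ past $\lVert f_{n-1}^*\omega\rVert_{U_{n-1}'}$) is exactly the right adjustment and introduces no gap.
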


Now take a nonempty compact convex subset $K$ of $\C^k$, $k\geq 1$.  Choose a decreasing basis of open neighbourhoods $U_n$, $n\in\N$, of $K$, and an increasing sequence of open balls $V_n$, $n\in\N$, in $\C^k$ with radius going to infinity, such that $U_1\subset V_1$.  Let
\[ \sigma_n^K(Y)= \sigma(K, U_n, V_n, n, 1/n)(Y), \qquad \tau_n^K(Y)= \tau(K, U_n, V_n, n, 1/n)(Y).\]
Clearly, 
\[ \tau_1\leq\sigma_1\leq\tau_2\leq\sigma_2\leq\tau_3\leq\cdots. \]

\begin{definition}
\label{d:K-Oka}
Let $K$ be a compact convex subset of $\C^k$, $k\geq 1$.  We say that a complex manifold $Y$ is $K$-Oka if every holomorphic map $K\to Y$ can be uniformly approximated on $K$ by holomorphic maps $\C^k\to Y$.
\end{definition}

The following proposition is now immediate.

\begin{proposition}
\label{p:K-Oka}
Let $K$ be a nonempty compact convex subset of $\C^k$, $k\geq 1$.  The compact manifold $Y$ is $K$-Oka if and only if $\sigma_n^K(Y)$ is finite for all $n\in\N$.  Equivalently, $\tau_n^K(Y)$ is finite for all $n\in\N$.
\end{proposition}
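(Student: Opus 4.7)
The plan is to mimic Proposition~\ref{p:newoka}, with the single convex set $K$ held fixed throughout and the quintuple parameters encoded by the sequence $n$. Monotonicity of $\sigma$ in its parameters makes $\sigma_n^K(Y) < \infty$ for all $n$ equivalent to $\sigma(K,U,V,r,\epsilon)(Y) < \infty$ for every quintuple with first coordinate $K$, and the same is true of $\tau$; together with the sandwich $\tau_n^K \leq \sigma_n^K \leq \tau_{n+1}^K$ (which follows from the interchange of strict and weak inequalities combined with parameter monotonicity), this gives the equivalence of the $\sigma$-- and $\tau$--formulations.

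For the direction $K$-Oka $\Rightarrow \sigma_n^K(Y) < \infty$, I argue by contradiction exactly as in Proposition~\ref{p:newoka}. If $\sigma_n^K(Y) = \infty$ for some $n$, there are holomorphic $f_m : U_n \to Y$ with $\|f_m^* \omega\|_{U_n} \leq n$ such that every holomorphic $g : V_n \to Y$ with $d(f_m, g) < 1/n$ on $K$ has $\|g^* \omega\|_{V_n} > m$. The bound on $f_m^* \omega$ and compactness of $Y$ make $\{f_m\}$ equicontinuous, so a subsequence converges locally uniformly on $U_n$ to some holomorphic $f : U_n \to Y$. The $K$-Oka property applied to $f$ yields an entire $g : \C^k \to Y$ with $d(f, g) < 1/(2n)$ on $K$; then $d(f_m, g) < 1/n$ on $K$ for $m$ large, so $\|g^* \omega\|_{V_n} > m$ for all such $m$. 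But $g$ is a fixed entire map and $V_n$ is relatively compact, so $\|g^* \omega\|_{V_n}$ is finite, a contradiction.

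For the converse, given holomorphic $f$ on a neighborhood of $K$ and $\epsilon > 0$, I would construct an entire approximation by iteration. Choose $n_0 < n_1 < \cdots$ with $\sum_i 1/n_i < \epsilon$ and each $n_{i+1}$ large enough to absorb the norm bound from step $i$. Setting $g_0 = f$, apply $\sigma_{n_i}^K(Y) < \infty$ successively to produce holomorphic $g_{i+1} : V_{n_{i+1}} \to Y$ with $d(g_i, g_{i+1}) < 1/n_{i+1}$ on $K$ and $\|g_{i+1}^* \omega\|_{V_{n_{i+1}}}$ finite. The sequence is Cauchy on $K$, and the balls $V_{n_i}$ exhaust $\C^k$, so a diagonal extraction produces a subsequence converging locally uniformly on $\C^k$ to an entire $g : \C^k \to Y$ with $d(f, g) < \epsilon$ on $K$.

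The main obstacle is this last diagonal extraction: the norm bounds $R_i$ on $\|g_i^* \omega\|_{V_{n_i}}$ provided by $\sigma_{n_i}^K < \infty$ need not be uniform in $i$, so equicontinuity on a fixed compact subset of $\C^k$ is not automatic across the whole sequence. Careful choice of the indices $n_i$, together with the Cauchy behavior on $K$ and compactness of $Y$, is needed to guarantee a normal subfamily on each $V_{n_j}$ and hence a locally uniform limit on all of $\C^k$.
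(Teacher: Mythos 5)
Your handling of the $\sigma$/$\tau$ equivalence via the interlacing chain $\tau_n^K\leq\sigma_n^K\leq\tau_{n+1}^K$ and monotonicity in the parameters is correct, and your proof that $K$-Oka implies finiteness of every $\sigma_n^K(Y)$ is a sound adaptation of the contradiction argument of Proposition~\ref{p:newoka} (equicontinuity from the derivative bound, a locally uniform limit $f$, an entire $1/(2n)$-approximant of $f$ on $K$, and the finiteness of $\lVert g^*\omega\rVert_{V_n}$ for a fixed entire $g$). For what it is worth, the paper supplies no argument at all here --- it declares the proposition immediate from the preceding discussion --- so these parts are exactly what is intended.

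The converse implication is where your write-up breaks down, and the obstacle you name in your final paragraph is a genuine gap, not a deferred technicality; moreover it cannot be repaired by a ``careful choice of the indices $n_i$.'' Finiteness of $\sigma_{n_{i+1}}^K(Y)$ constrains $g_{i+1}$ only through the bound $\lVert g_{i+1}^*\omega\rVert_{V_{n_{i+1}}}\leq R_{i+1}$ and through its closeness to $g_i$ \emph{on $K$ alone}; off $K$ you have no relation whatsoever between $g_{i+1}$ and $g_i$, and on any fixed ball $V_{n_j}$ the bounds $R_i$, $i\geq j$, are handed to you by the hypothesis and are not something you can shrink by reindexing. Maps into a compact target with unbounded derivatives need not subconverge, so neither the sequence $(g_i)$ nor any subsequence need have a locally uniform limit on $\C^k$, and being uniformly Cauchy on $K$ produces at best a continuous map on $K$, not an entire one. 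The standard mechanism for upgrading approximants on large balls to entire approximants --- the ``easy'' direction of Proposition~\ref{p:newoka} --- approximates at stage $i$ uniformly on the closed ball $\overline{V}_{n_i}$, which makes the sequence uniformly Cauchy on every compact set; but that step uses finiteness of $\sigma$ for quintuples whose first coordinate is the compact convex set $\overline{V}_{n_i}$, which is available when one quantifies over all convex compacta as in Proposition~\ref{p:newoka} and is precisely what the fixed-$K$ hypothesis of Proposition~\ref{p:K-Oka} does not give you. As it stands, your argument establishes only one implication, and you need a genuinely different idea (not present in your sketch) to produce an entire map from approximants that are controlled only on $K$.
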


We conclude this section by pointing out that the sequences $(\sigma_n^K(Y))$ and $(\tau_n^K(Y))$ go to infinity if $K$ has nonempty interior and $\dim Y\neq 0$.  Otherwise there is $c>0$ such that every holomorphic map $f_n:U_n\to Y$ with $\lVert f_n^*\omega\rVert_{U_n}\leq n$ can be approximated to within $1/n$ on $K$ by a holomorphic map $g_n:V_n\to Y$ with $\lVert g_n^*\omega\rVert_{V_n}\leq c$.  By equicontinuity it follows that every holomorphic map $K\to Y$ extends to $\C^k$ (more precisely, every holomorphic map from an open neighbourhood of $K$ to $Y$ is equal on $K$ to a holomorphic map $\C^k\to Y$).  If $K$ has nonempty interior and $\dim Y\neq 0$, this is absurd.

\section{The Oka property in a family of compact manifolds}

\noindent
Let $\pi:X\to B$ be a family of compact complex manifolds, that is, a proper holomorphic submersion from a complex manifold $X$ onto a complex manifold $B$.  Fix a base point $0$ in $B$.  Take a Hermitian metric $\omega$ on $X$.  Write $X_t$ for the compact complex manifold $\pi^{-1}(t)$, $t\in B$.  For a quintuple $(K, U, V, r, \epsilon)$, write $\sigma_{U,V}(t)$ or simply $\sigma(t)$ for $\sigma(K,U,V,r,\epsilon)(X_t)$, and $\tau_{U,V}(t)$ or simply $\tau(t)$ for $\tau(K,U,V,r,\epsilon)(X_t)$, defined using the metric $\omega|X_t$.

The following semicontinuity result is our first main theorem.

\begin{theorem}  
\label{t:semicontinuity1}
Let $(K, U, V_j, r, \epsilon)$, $j=1, 2$, be quintuples such that $V_1\Subset V_2$ and $V_2$ is Stein.  Then
\[\limsup_{t\to 0}\sigma_{U,V_1}(t) \leq \sigma_{U,V_2}(0). \]
\end{theorem}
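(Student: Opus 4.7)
The plan is to argue by contradiction: extract a holomorphic limit $f:U\to X_0$ from a putative counterexample, use the hypothesis $\sigma_{U,V_2}(0)<\infty$ to approximate $f$ by some $g:V_2\to X_0$, and then deform $g$ along the family to produce approximations $g_n:V_1\to X_{t_n}$ that contradict the choice of the counterexample.

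Write $R=\sigma_{U,V_2}(0)$ and assume $R<\infty$. If the conclusion fails, then for some $R'>R$ there exist $t_n\to 0$ and holomorphic $f_n:U\to X_{t_n}$ with $\|f_n^*\omega\|_U\leq r$ such that every holomorphic $g:V_1\to X_{t_n}$ with $d(f_n,g)<\epsilon$ on $K$ has $\|g^*\omega\|_{V_1}>R'$. The bound on $\|f_n^*\omega\|_U$ makes the family equicontinuous, and properness of $\pi$ confines the images to a compact subset of $X$. By Arzel\`a--Ascoli a subsequence of $(f_n)$ converges locally uniformly on $U$ to a holomorphic $f:U\to X_0$; the weak inequality $\|f^*\omega\|_U\leq r$ survives the limit---precisely why Definition~\ref{d:sigma} uses a weak inequality here. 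Since $\sigma_{U,V_2}(0)=R$, we may choose $R<R''<R'$ and a holomorphic $g:V_2\to X_0$ with $d(f,g)<\epsilon$ on $K$ and $\|g^*\omega\|_{V_2}\leq R''$.

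The principal obstacle is the following deformation lemma: for $n$ large there exists a holomorphic $g_n:V_1\to X_{t_n}$ with $g_n\to g|_{V_1}$ uniformly on $V_1$. Pick an intermediate $V_1\Subset V_1'\Subset V_2$ and cover the compact set $g(\overline{V_1'})$ by finitely many local submersion charts for $\pi$, in which $\pi$ is a coordinate projection. Naive local candidates obtained by shifting the base coordinate of $X$ from $0$ to $t_n$ do not glue globally; the infinitesimal obstruction is a $1$-cocycle on $V_1'$ with values in the coherent sheaf $g^*T(X/B)$, which is a coboundary by Cartan's Theorem~B since $V_1'$ is Stein. A standard implicit-function iteration then promotes the formal solution to an honest $g_n$ for $t_n$ close enough to $0$; this is a parametric version of the familiar deformation theory of holomorphic maps into a family over a Stein source, and it is here that both the Steinness of $V_2$ and the shrinkage $V_1\Subset V_2$ are essential.

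To conclude, Cauchy estimates promote the uniform convergence of $g_n$ on a neighborhood of $\overline{V_1}$ to uniform convergence of derivatives on $V_1$, so $\|g_n^*\omega\|_{V_1}\to\|g^*\omega\|_{V_1}\leq R''$ and hence $\|g_n^*\omega\|_{V_1}\leq R'$ for $n$ large. On $K\subset V_1$, the triangle inequality $d(f_n,g_n)\leq d(f_n,f)+d(f,g)+d(g,g_n)$ yields $d(f_n,g_n)<\epsilon$ for $n$ large, because $d(f,g)<\epsilon$ is strict---another place where the conventions of Definition~\ref{d:sigma} matter. This contradicts the defining property of $f_n$ and completes the argument.
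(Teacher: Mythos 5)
Your proposal is correct and follows essentially the same contradiction argument as the paper: extract a normal-families limit $f$ on $U$ (using properness of $\pi$ and the bound $\lVert f_n^*\omega\rVert_U\leq r$), approximate it by a map $g$ on the Stein set $V_2$, and deform $g$ over an intermediate $V_1\Subset V'\Subset V_2$ to the nearby fibres $X_{t_n}$ to contradict the choice of the $f_n$, with the same attention to which inequalities are weak and which are strict. The only divergence is in how the perturbation step (the paper's Lemma~\ref{l:perturb}) is justified: the paper takes a Stein neighbourhood of the graph of $g$ in $V_2\times X$ and lifts vector fields to obtain flows in the style of Ehresmann's theorem, whereas you sketch a \v{C}ech-cocycle argument killed by Cartan's Theorem~B followed by an implicit-function iteration; both are standard routes to the same lemma and both hinge on the Steinness of the larger domain, though your iteration step is left more schematic than the paper's flow construction.
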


The following two corollaries are consequences of the theorem and Proposition \ref{p:newoka}.

\begin{corollary}  
\label{c:gdelta}
The set of $t\in B$ for which $X_t$ is Oka is $G_\delta$.
\end{corollary}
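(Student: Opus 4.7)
\noindent\emph{Proof plan.}
My plan is to exhibit the Oka locus $\{t\in B:X_t\text{ is Oka}\}$ as a countable intersection of open subsets of $B$. By Proposition \ref{p:newoka}, this locus equals
\[\bigcap_{q}A(q),\qquad A(q):=\{t\in B:\sigma(q)(X_t)<\infty\},\]
the intersection ranging over all quintuples $q=(K,U,V,r,\epsilon)$ with $K$ convex. Theorem \ref{t:semicontinuity1} is custom-built to upgrade the pointwise finiteness defining $A(q)$ to finiteness on a neighbourhood of $t$, provided one is free to slightly enlarge $V$.

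To make the intersection countable I fix, for each $(k,m,n)\in\N^3$, the closed ball $K_{k,m}=\overline{B}(0,m)\subset\C^k$ together with countable data $(U_{k,m,n},V_{k,m,n})$ chosen as in the setup preceding Proposition \ref{p:K-Oka}, and let $\mathcal{Q}$ be the countable family of quintuples $q_{k,m,n}=(K_{k,m},U_{k,m,n},V_{k,m,n},n,1/n)$. Proposition \ref{p:K-Oka} says that $X_t$ is $K_{k,m}$-Oka iff $\sigma(q_{k,m,n})(X_t)<\infty$ for every $n$, and a standard equivalence in Oka theory---that CAP need only be tested on a cofinal family of closed balls in each $\C^k$---then yields
\[\{t\in B:X_t\text{ is Oka}\}=\bigcap_{q\in\mathcal{Q}}A(q).\]

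The key step is to replace each $A(q)$ by its interior and show
\[\{t\in B:X_t\text{ is Oka}\}=\bigcap_{q\in\mathcal{Q}}\operatorname{int}A(q).\]
The inclusion $\supset$ is immediate. For $\subset$, assume $X_t$ is Oka and fix $q=(K,U,V,r,\epsilon)\in\mathcal{Q}$. I choose any bounded Stein open set $V'\Subset\C^k$ with $V'\Supset V$---a slightly larger open ball will do; then $(K,U,V',r,\epsilon)$ is a valid quintuple and Proposition \ref{p:newoka} gives $\sigma(K,U,V',r,\epsilon)(X_t)<\infty$. Theorem \ref{t:semicontinuity1} now yields
\[\limsup_{s\to t}\sigma(q)(X_s)\leq\sigma(K,U,V',r,\epsilon)(X_t)<\infty,\]
so $\sigma(q)$ stays finite on some neighbourhood $W$ of $t$; hence $W\subset A(q)$ and $t\in\operatorname{int}A(q)$. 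Since $\mathcal{Q}$ is countable and each $\operatorname{int}A(q)$ is open, the Oka locus is thereby realised as a countable intersection of open sets, that is, as a $G_\delta$ subset of $B$.

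I expect the only real obstacle to be the reduction carried out in the second paragraph: the Oka property is defined using \emph{all} convex compacts, which is uncountable, whereas Proposition \ref{p:K-Oka} only countabilises the $(U,V,r,\epsilon)$ data for a fixed $K$. The passage from ``all convex $K$'' to ``closed balls in each $\C^k$'' is not internal to the excerpt and must be imported from the standard equivalence of the various formulations of the Oka property. Once that reduction is granted, the semicontinuity delivered by Theorem \ref{t:semicontinuity1} does the remaining work essentially for free.
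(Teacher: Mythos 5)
Your use of Theorem \ref{t:semicontinuity1} is exactly the paper's mechanism: for a quintuple $q=(K,U,V,r,\epsilon)$ from the countable list, enlarge $V$ to a Stein $V'\Supset V$, get $\sigma(K,U,V',r,\epsilon)(X_t)<\infty$ at an Oka fibre from Proposition \ref{p:newoka}, and conclude from semicontinuity that $\sigma(q)$ stays finite on a neighbourhood of $t$, so that $A(q)$ may be replaced by its interior. The paper packages this as a nested chain $\{\sigma_{n+1}<\infty\}\subset W_n\subset\{\sigma_n<\infty\}$ along an increasing Stein exhaustion by sets $V_n$, but the two formulations are interchangeable. That half of your argument is fine.

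The gap is in the countabilisation. You reduce the Oka property to being $K$-Oka for the closed balls $K=\overline{B}(0,m)\subset\C^k$ and appeal to ``a standard equivalence in Oka theory'' that CAP need only be tested on a cofinal family of closed balls. No such equivalence is available to you: balls are cofinal only in the sense that every compact set lies in some ball, but that is the wrong notion of cofinality here. A holomorphic map defined on a neighbourhood $U$ of an arbitrary compact convex $K$ (say a long thin $K$ with a thin neighbourhood $U$) need not be defined near \emph{any} ball containing $K$, so knowing approximation for maps defined near balls tells you nothing about such a map; composing with affine maps only upgrades balls to ellipsoids, which suffer the same defect. What you actually need is cofinality with respect to \emph{pairs}: for every compact convex $K$ and every open $U\supset K$, a member $K'$ of the countable family with $K\subset K'\subset U$, so that $f|_U$ is a map on a neighbourhood of $K'$ and approximation on $K'$ implies approximation on $K$. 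This is precisely why the paper takes for $K$ the convex hulls of finite sets of points with rational coordinates and for $U$ the interiors of larger such hulls (together with integer $r$, $1/\epsilon$, and integer-radius balls $V$): between any compact convex set and any neighbourhood of it one can interpolate such a rational pair. With that countable family in place of your balls, the rest of your argument goes through verbatim.
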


\begin{proof}  
Now $\sigma$ is finite for all quintuples $(K, U, V, r, \epsilon)$ with $K$ convex if and only if $\sigma$ is finite for a suitable countable set of such quintuples.  Namely, we can take $r$ and $1/\epsilon$ to be integers, $V$ to be a ball of integer radius centred at the origin, and in between any compact convex $K$ and an open neighbourhood $U$ of $K$ we can fit the convex hull of a finite set of points with rational coordinates and the interior of a larger such hull.

Fix $K, U, r, \epsilon$ and take an increasing sequence $V_1\Subset V_2\Subset \cdots \Subset \C^k$ of Stein open neighbourhoods of $U$.  Write $\sigma_n$ for $\sigma_{U,V_n}$.  It suffices to show that $\bigcap\{\sigma_n<\infty\}$ is $G_\delta$.  This holds since by Theorem \ref{t:semicontinuity1}, $\{\sigma_n<\infty\}$ is a neighbourhood of $\{\sigma_{n+1}<\infty\}$.  In other words, for each $n\geq 1$, there is an open set $W_n$ with
\[\{\sigma_{n+1}<\infty\}\subset W_n\subset \{\sigma_n<\infty\},\] 
so $\bigcap\{\sigma_n<\infty\}=\bigcap W_n$.
\end{proof}

\begin{definition}
\label{d:uniformlyoka}
Let $A$ be a relatively compact subset of $B$.  The family $\{X_t:t\in A\}$ is {\it uniformly Oka} if $\sup\limits_{t\in A} \sigma(t)<\infty$ for every quintuple $(K, U, V, r, \epsilon)$ with $K$ convex.
\end{definition}

Requiring $A$ to be relatively compact makes the definition independent of the choice of $\omega$.  The next corollary, which is an immediate consequence of Theorem \ref{t:semicontinuity1}, suggests that the uniform Oka property is a reasonable notion.

\begin{corollary}  
\label{c:uniform-on-compacts}
If $A$ is a compact subset of $B$ and $X_t$ is Oka for all $t\in A$, then the family $\{X_t:t\in A\}$ is uniformly Oka.
\end{corollary}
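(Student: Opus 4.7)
The plan is to reduce the uniform bound on $A$ to a covering argument: I fix one quintuple, use Theorem \ref{t:semicontinuity1} to upgrade pointwise finiteness of $\sigma$ to local boundedness, and then exploit compactness of $A$.

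More precisely, fix a quintuple $(K, U, V, r, \epsilon)$ with $K$ convex. Since $V \Subset \C^k$, I can choose any open ball $V' \Subset \C^k$ with $V \Subset V'$; such a ball is Stein. For each $t_0 \in A$, the manifold $X_{t_0}$ is Oka, so by Proposition \ref{p:newoka} we have $\sigma_{U,V'}(t_0) < \infty$. Theorem \ref{t:semicontinuity1}, applied at the base point $t_0$ (an arbitrary point of $B$, after relabelling), then gives
\[ \limsup_{t \to t_0} \sigma_{U,V}(t) \leq \sigma_{U,V'}(t_0) < \infty. \]
Consequently there is an open neighbourhood $W_{t_0} \subset B$ of $t_0$ and a finite constant $M_{t_0}$ with $\sigma_{U,V}(t) \leq M_{t_0}$ for all $t \in W_{t_0}$.

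By compactness of $A$, finitely many neighbourhoods $W_{t_1}, \ldots, W_{t_m}$ cover $A$, and then
\[ \sup_{t \in A} \sigma_{U,V}(t) \leq \max_{1 \leq i \leq m} M_{t_i} < \infty. \]
Since the quintuple was an arbitrary one with $K$ convex, this is precisely the uniform Oka condition of Definition \ref{d:uniformlyoka}.

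There is essentially no obstacle here; the content is entirely in Theorem \ref{t:semicontinuity1}. The only mild point to notice is that the $V$ appearing in a quintuple is only required to be relatively compact in $\C^k$ and need not itself be Stein, which is exactly why one inflates it to a ball $V'$ before invoking the semicontinuity theorem.
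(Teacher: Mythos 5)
Your argument is correct and is exactly the intended one: the paper states the corollary as an immediate consequence of Theorem \ref{t:semicontinuity1}, and the way to make that precise is precisely your step of inflating $V$ to a Stein ball $V'$, applying the semicontinuity theorem at each point of $A$ to get local boundedness of $\sigma_{U,V}$, and finishing with a finite subcover. No gaps.
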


To prove Theorem \ref{t:semicontinuity1} we need a lemma.  It is surely known, but for the reader's convenience we sketch a proof.

\begin{lemma}  
\label{l:perturb}
Let $M$ be a Stein manifold and $f:M\to X_0$ be holomorphic.  For every relatively compact open subset $\Omega$ of $M$, there is an open neighbourhood $W$ of $0$ in $B$ and a holomorphic map $F:\Omega\times W\to X$ such that $\pi\circ F=\mathrm{pr}_2$ and $F(\cdot,0)=f|\Omega$.
\end{lemma}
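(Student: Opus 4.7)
The plan is to recast the lemma as a section-extension problem for the holomorphic submersion
\[ \tilde\pi:M\times X\to M\times B,\qquad (x,y)\mapsto(x,\pi(y)), \]
and to solve it via the holomorphic implicit function theorem in a Banach-manifold setting. The map $f$ defines the holomorphic section $\sigma_0(x,0)=(x,f(x))$ of $\tilde\pi$ over the closed complex submanifold $M\times\{0\}$; a holomorphic extension of $\sigma_0$ to a section over $\Omega\times W$, for some open neighbourhood $W$ of $0$ in $B$, yields the desired $F$.

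Since $M$ is Stein, I would first enlarge $\Omega$ to a Stein open set $\Omega'$ with $\overline\Omega\subset\Omega'\Subset M$, taken as a smooth sublevel set of a strictly plurisubharmonic exhaustion of $M$. Work in the complex Banach manifold $\mathcal H$ of bounded holomorphic maps $\Omega'\to X$ in a neighbourhood of $f$, with its standard structure coming from a holomorphic tubular neighbourhood of the graph of $f$ in $\Omega'\times X$; the tangent space at $f$ is the Banach space of bounded holomorphic sections of $f^*TX$ over $\Omega'$. Postcomposition with $\pi$ defines a holomorphic Banach map $\Pi:\mathcal H\to\mathrm{Hol}_b(\Omega',B)$, and its differential at $f$ is the map on global sections induced by the short exact sequence of locally free sheaves on $\Omega'$,
\[ 0\to f^*TX_0\to f^*TX\to T_0B\otimes_\C\mathcal O_{\Omega'}\to 0. \]
Cartan's Theorem~B on the Stein manifold $\Omega'$ gives $H^1(\Omega',f^*TX_0)=0$, so every constant section $v\in T_0B\subset H^0(\Omega',T_0B\otimes\mathcal O_{\Omega'})$ lifts to a global holomorphic section of $f^*TX$. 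Consequently $\Pi$ is submersive at $f$ in the directions tangent to the finite-dimensional submanifold of constant maps $\{c_t:x\mapsto t\}_{t\in B}\subset\mathrm{Hol}_b(\Omega',B)$. The holomorphic Banach implicit function theorem then produces a holomorphic family $W\ni t\mapsto F_t\in\mathcal H$ with $F_0=f|_{\Omega'}$ and $\pi\circ F_t\equiv t$ for $W$ a sufficiently small neighbourhood of $0$ in $B$. Hartogs' separate-holomorphicity theorem gives joint holomorphicity of $F(x,t):=F_t(x)$ on $\Omega'\times W$, and restriction to $\Omega\times W$ finishes the proof.

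The principal technical obstacle is the Banach-analytic setup: equipping $\mathcal H$ with a complex-manifold structure in a neighbourhood of $f$ and verifying that $\Pi$ is Banach-holomorphic with the claimed differential, together with the splitting needed to apply the implicit function theorem. Once this framework is in place, the genuine analytic content of the proof reduces entirely to the Cartan~B vanishing $H^1(\Omega',f^*TX_0)=0$, which is immediate since $\Omega'$ is Stein and $f^*TX_0$ is a locally free coherent sheaf.
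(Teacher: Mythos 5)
Your route is genuinely different from the paper's, and it can be made to work, but as written the key analytic step has a gap. The phrase ``$\Pi$ is submersive at $f$ in the directions tangent to the submanifold of constant maps'' is not a hypothesis under which the holomorphic Banach implicit function theorem applies. To solve $\pi\circ F_t\equiv t$ near $f$ you need either (a) $d\Pi_f$ to be a split surjection onto the Banach space $H^0_b(\Omega',T_0B\otimes\mathcal O_{\Omega'})$, or (b) $\Pi$ to be transverse to the finite-dimensional submanifold $\{c_t\}$, which again forces the image of $d\Pi_f$ to be of finite codimension with a closed complement --- in either case essentially full surjectivity with a \emph{bounded} right inverse. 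What you actually establish via $H^1(\Omega',f^*TX_0)=0$ is only that each individual constant section $v\in T_0B$ admits some holomorphic lift to $f^*TX$; this lift need not even be bounded on $\Omega'$ (Theorem B on $\Omega'$ gives no boundary control), and lifting a finite-dimensional space of sections does not give the solution operator the implicit function theorem requires. The correct input is stronger but still free on a Stein manifold: work on a Stein $\Omega''$ with $\overline{\Omega'}\subset\Omega''\Subset M$ and split the short exact sequence of \emph{vector bundles} $0\to f^*TX_0\to f^*TX\to T_0B\otimes\mathcal O\to 0$ holomorphically over $\Omega''$ (Docquier--Grauert/Theorem B applied to $\mathrm{Hom}(T_0B\otimes\mathcal O,f^*TX_0)$). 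The resulting bundle map $s:T_0B\otimes\mathcal O\to f^*TX$ induces a bounded linear right inverse of $d\Pi_f$ on $\Omega'$ and a splitting of its kernel, after which the Banach implicit function theorem applies as you intend. The other ingredient you flag --- the Banach manifold chart on $\mathcal H$ near $f$ --- also needs the Stein neighbourhood of the graph of $f$ (Siu) plus a holomorphic retraction; that is a real prerequisite, not just bookkeeping.

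It is worth noting that once these repairs are made, your proof and the paper's consume exactly the same Stein-theoretic resources: a Stein neighbourhood of the graph of $f$ in $M\times X$, and the splitting of the relative tangent sequence over a Stein manifold. The paper uses the splitting to lift the coordinate vector fields $\partial/\partial z_j$ from $B$ and then integrates them, Ehresmann-style, composing flows and projecting back to $X$; relative compactness of $\Omega$ guarantees a uniform flow time. That argument is more elementary (no Banach-analytic framework) and gives the same conclusion. Your approach buys a cleaner functorial statement (a local holomorphic section of the postcomposition map $\Pi$, from which holomorphic dependence on $t$ is automatic rather than extracted from flow regularity), at the cost of setting up the Banach manifold of maps; if you go this way, state precisely which version of the implicit function theorem you invoke and verify the split-surjectivity hypothesis as above.
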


By an example of Brody and Green \cite{Brody-Green}, in general we cannot take $\Omega=M$.  (Their example also shows that the set of $t\in B$ for which $X_t$ is hyperbolic need not be closed.)

\begin{proof}
As a Stein submanifold of $M\times X$, the graph of $f$ has a Stein open neighbourhood $V$ in $M\times X$.  Let $p=\pi\circ\mathrm{pr}_2:V\to B$.  Since $p$ is a holomorphic submersion, $\ker p_*$ is a holomorphic subbundle of the holomorphic tangent bundle $TV$ of $V$.  Since $V$ is Stein, $TV$ splits into the direct sum of $\ker p_*$ and a holomorphic subbundle isomorphic to $p^* TB$.  Thus every holomorphic vector field on $B$ lifts by $p$ to one on $V$.

We may now proceed as in the usual proof of Ehresmann's fibration theorem.  We lift the vector fields $\partial/\partial z_j$ on $B$ (defined using local coordinates on a neighbourhood of $0$) to holomorphic vector fields on $V$, postcompose the map $m\mapsto (m,f(m))$ by the flows of the liftings, one after another, and then postcompose by $\mathrm{pr}_2:V\to X$.  The flows are defined for long enough to cover the same neighbourhood of $0$ in $B$ for all initial points in $\Omega$.
\end{proof}

\begin{proof}[Proof of Theorem  \ref{t:semicontinuity1}]
We argue by contradiction.  Suppose there is a sequence $(t_n)$ in $B$ converging to $0$ such that $\lim\limits_{n\to\infty}\sigma_{U,V_1}(t_n)$ exists (possibly equal to $\infty$) and 
\[ \lim\limits_{n\to\infty}\sigma_{U,V_1}(t_n) > \sigma_{U,V_2}(0).\]  
Write $X_n$ for $X_{t_n}$.  First suppose that $\lim\limits_{n\to\infty}\sigma_{U,V_1}(t_n)$ is finite; then we may assume that $\sigma_{U,V_1}(t_n)$ is finite for all $n\geq 1$, and we have
\[ \lim\limits_{n\to\infty}\sigma_{U,V_1}(t_n) > \sigma_{U,V_2}(0) +3\delta \]
for some $\delta>0$.  For each $n\geq 1$, find a holomorphic map $f_n:U\to X_n$ with $\lVert f_n^*\omega\rVert_U\leq r$ such that
\[ \inf_{\substack{g:V_1\to X_n \textrm{ hol.} \\ d(f_n,g)<\epsilon \textrm{ on }K}} \lVert g^*\omega \rVert_{V_1} > \sigma_{U,V_1}(t_n) - \delta,\]
so
\[ \inf_{\substack{g:V_1\to X_n \textrm{ hol.} \\ d(f_n,g)<\epsilon \textrm{ on }K}} \lVert g^*\omega \rVert_{V_1} > \lim\limits_{n\to\infty}\sigma_{U,V_1}(t_n) - 2\delta > \sigma_{U,V_2}(0)+\delta \]
for $n$ large enough.  By passing to a subsequence, we may assume that $(f_n)$ converges locally uniformly on $U$ to a holomorphic map $f:U\to X_0$.  Then $\lVert f^*\omega\rVert_U \leq r$.  There is a holomorphic map $h:V_2\to X_0$ such that $d(f,h)<\epsilon$ on $K$ and $\lVert h^*\omega\rVert_{V_2} < \sigma_{U,V_2}(0)+\delta$.  

Find an open set $V'$ in $\C^k$ with $V_1\Subset V'\Subset V_2$.  By Lemma \ref{l:perturb}, since $V_2$ is Stein, there is an open neighbourhood $W$ of $0$ in $B$ and a holomorphic map $H:V'\times W\to X$ such that $\pi\circ H=\mathrm{pr}_2$ and $H(\cdot,0)=h|V'$.   For $n$ sufficiently large, setting $g_n=H(\cdot,t_n)|V_1\to X_n$, we have $d(f_n,g_n)<\epsilon$ on $K$ and $\lVert g_n^*\omega\rVert_{V_1} < \sigma_{U,V_2}(0)+\delta$, which is a contradiction.

When $\lim\limits_{n\to\infty}\sigma_{U,V_1}(t_n)=\infty$, there are holomorphic maps $f_n:U\to X_n$ with $\lVert f_n^*\omega\rVert_U\leq r$ such that
\[ \inf_{\substack{g:V_1\to X_n \textrm{ hol.} \\ d(f_n,g)<\epsilon \textrm{ on }K}} \lVert g^*\omega \rVert_{V_1} > \sigma_{U,V_2}(0)+ 1 \]
for $n$ large enough, and we conclude as above.
\end{proof}

Our second main result is dual to Theorem \ref{t:semicontinuity1}.

\begin{theorem}  
\label{t:semicontinuity2}
Let $(K, U_j, V, r, \epsilon)$, $j=1, 2$, be quintuples such that $U_2\Subset U_1$ and $U_1$ is Stein.  Then
\[\tau_{U_1,V}(0)\leq\liminf_{t\to 0}\tau_{U_2,V}(t). \]
\end{theorem}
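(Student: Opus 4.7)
The plan is to argue by contradiction, mirroring the proof of Theorem \ref{t:semicontinuity1} with the roles of source and target swapped: now the Stein hypothesis sits on the shrinking side, and Lemma \ref{l:perturb} is used to transport a witness map \emph{horizontally} from $X_0$ to nearby fibres. If $\liminf_{t\to 0}\tau_{U_2,V}(t)=\infty$ there is nothing to prove, so call this liminf $L<\infty$ and suppose towards a contradiction that $\tau_{U_1,V}(0)>L$. Pick $\delta>0$ with $\tau_{U_1,V}(0)>L+3\delta$ (any $\delta$ works if the left side is infinite) and a sequence $t_n\to 0$ with $\tau_{U_2,V}(t_n)<L+\delta$ eventually. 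By definition of $\tau_{U_1,V}(0)$ as a supremum, there is a holomorphic $f:U_1\to X_0$ with $\lVert f^*\omega\rVert_{U_1}<r$ such that every holomorphic $g:V\to X_0$ with $d(f,g)\leq\epsilon$ on $K$ satisfies $\lVert g^*\omega\rVert_V>L+2\delta$.

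The first key step transfers $f$ to nearby fibres. Fix an open $\Omega$ with $U_2\Subset\Omega\Subset U_1$. Since $U_1$ is Stein, Lemma \ref{l:perturb} yields a neighbourhood $W$ of $0$ in $B$ and a holomorphic $F:\Omega\times W\to X$ with $\pi\circ F=\mathrm{pr}_2$ and $F(\cdot,0)=f|\Omega$. For $n$ so large that $t_n\in W$, set $f_n=F(\cdot,t_n)|U_2:U_2\to X_{t_n}$. Then $f_n\to f|U_2$ uniformly on $\overline{U_2}$, and because this convergence holds on the slightly larger set $\Omega\supset\overline{U_2}$, Cauchy estimates give $\lVert f_n^*\omega\rVert_{U_2}\to\lVert f^*\omega\rVert_{U_2}$. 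Since $\lVert f^*\omega\rVert_{U_2}\leq\lVert f^*\omega\rVert_{U_1}<r$ is strict, we obtain $\lVert f_n^*\omega\rVert_{U_2}<r$ for all large $n$, so $f_n$ qualifies as a competitor in the supremum defining $\tau_{U_2,V}(t_n)$. This is precisely where the strict source inequality in Definition \ref{d:tau} earns its keep.

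The second key step returns, in the limit, from $X_{t_n}$ back to $X_0$. Since $\tau_{U_2,V}(t_n)<L+\delta$, we find holomorphic $g_n:V\to X_{t_n}$ with $d(f_n,g_n)\leq\epsilon$ on $K$ and $\lVert g_n^*\omega\rVert_V\leq L+\delta$. The uniform bound on the pullbacks of $\omega$ gives equicontinuity, and since $\pi$ is proper the $g_n$ all take values in the compact set $\pi^{-1}(\overline W)$; the family $(g_n)$ is therefore normal. After passing to a subsequence, $g_n\to g$ locally uniformly on $V$ for a holomorphic $g:V\to X$, and $\pi\circ g_n=t_n\to 0$ forces $g(V)\subset X_0$. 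Cauchy estimates upgrade this to $g_n^*\omega\to g^*\omega$ locally uniformly on $V$, so $\lVert g^*\omega\rVert_V\leq L+\delta$, while passing to the limit in the non-strict inequality $d(f_n,g_n)\leq\epsilon$ on $K$ gives $d(f,g)\leq\epsilon$ on $K$. Thus $g$ is a legitimate approximant of $f$ with $\lVert g^*\omega\rVert_V\leq L+\delta<L+2\delta$, contradicting the choice of $f$.

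The main obstacle is the same subtle interplay between strict and non-strict inequalities that motivates the very definition of $\tau$: the strict bound on $\lVert f^*\omega\rVert_{U_1}$ must survive the small perturbation from Lemma \ref{l:perturb} (to let $f_n$ enter the supremum for $t_n$), while the non-strict bound on $d(f_n,g_n)$ must survive the limit (to let $g$ enter the infimum for $0$). Reversing either inequality would break the argument, which is exactly the asymmetry with Theorem \ref{t:semicontinuity1}.
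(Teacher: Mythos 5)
Your proof is correct and follows essentially the same route as the paper's: argue by contradiction, use Lemma \ref{l:perturb} on the Stein set $U_1$ to push a near-extremal $f:U_1\to X_0$ to maps $f_n:U_2\to X_{t_n}$ that still satisfy the strict bound $\lVert f_n^*\omega\rVert_{U_2}<r$, then extract a normal-families limit of the approximants $g_n$ to contradict the choice of $f$. Your explicit commentary on where the strict/non-strict inequalities in Definition \ref{d:tau} are used matches the paper's intent exactly.
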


\begin{proof}
We argue by contradiction.  Suppose there is a sequence $(t_n)$ in $B$ converging to $0$ such that $\lim\limits_{n\to\infty}\tau_{U_2,V}(t_n)$ exists and 
\[ \lim\limits_{n\to\infty}\tau_{U_2,V}(t_n) < \tau_{U_1,V}(0),\]
so in particular $\lim\limits_{n\to\infty}\tau_{U_2,V}(t_n)$ is finite and we may assume that $\tau_{U_2,V}(t_n)$ is finite for all $n$.  Find a holomorphic map $f:U_1\to X_0$ such that $\lVert f^*\omega\rVert_{U_1} < r$ and
\[ \lim\limits_{n\to\infty}\tau_{U_2,V}(t_n)+\delta < \inf_{\substack{g:V\to X_0 \textrm{ hol.} \\ d(f,g)\leq\epsilon \textrm{ on }K}} \lVert g^*\omega \rVert_V \]
with $\delta>0$ (the infimum might be infinite).  

Find an open set $U'$ in $\C^k$ with $U_2\Subset U'\Subset U_1$.  By Lemma \ref{l:perturb}, since $U_1$ is Stein, there is an open neighbourhood $W$ of $0$ in $B$ and a holomorphic map $F:U'\times W\to X$ such that $\pi\circ F=\mathrm{pr}_2$ and $F(\cdot,0)=f|U'$.  We may assume that $t_n\in W$ for all $n$.  Let $f_n=F(\cdot,t_n)|U_2\to X_{t_n}$.  Then $\lVert f_n^*\omega\rVert_{U_2} < r$ for $n$ large enough.  For such $n$, there is a holomorphic map $g_n:V\to X_{t_n}$ with $d(f_n,g_n)\leq\epsilon$ on $K$ and $\lVert g_n^*\omega \rVert_{V}<\tau_{U_2,V}(t_n)+\delta/2$, so $\lVert g_n^*\omega \rVert_{V}<\lim\limits_{n\to\infty}\tau_{U_2,V}(t_n)+\delta$ for $n$ large enough.

By passing to a subsequence, we may assume that $(g_n)$ converges locally uniformly on $V$ to a holomorphic map $g:V\to X_0$.  Then $d(f,g)\leq\epsilon$ on $K$ and 
\[ \lVert g^*\omega \rVert_{V}\leq \lim\limits_{n\to\infty}\tau_{U_2,V}(t_n)+\delta, \]
which is a contradiction.
\end{proof}

The following result is an immediate consequence of Theorem \ref{t:semicontinuity2}, as it is clear that $\sigma$ can be replaced by $\tau$ in Definition \ref{d:uniformlyoka}.

\begin{corollary}  
\label{c:closure}
If $A$ is a relatively compact subset of $B$ and the family $\{X_t:t\in A\}$ is uniformly Oka, then the family $\{X_t:t\in \bar A\}$ is also uniformly Oka.  In particular, $X_t$ is Oka for every $t\in\bar A$.
\end{corollary}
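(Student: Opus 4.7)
The plan is to combine Theorem \ref{t:semicontinuity2} with the remark preceding the corollary that $\sigma$ can be replaced by $\tau$ in Definition \ref{d:uniformlyoka}. The latter equivalence follows from $\tau \leq \sigma$ in one direction, and in the other from the observation that for any choice $K \subset U' \subset U$, $V \subset V' \Subset \C^k$, $r' > r$, and $\epsilon' < \epsilon$, restricting competing $f$'s from $U$ to $U'$ and competing $g$'s from $V'$ to $V$ gives
\[ \sigma(K, U, V, r, \epsilon)(Y) \leq \tau(K, U', V', r', \epsilon')(Y). \]

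Given a convex-$K$ quintuple $(K, U, V, r, \epsilon)$, I would insert two intermediate neighbourhoods $K \subset U_2 \Subset U_1 \subset U$ with $U_1$ Stein (take $U_1$ to be a small convex neighbourhood of $K$). For any $s \in \bar A$, monotonicity of $\tau$ in its first domain parameter (shrinking $U$ enlarges the family of admissible $f$'s and hence enlarges the sup) gives $\tau_{U, V}(s) \leq \tau_{U_1, V}(s)$, and Theorem \ref{t:semicontinuity2} applied at $s$ gives $\tau_{U_1, V}(s) \leq \liminf_{t \to s} \tau_{U_2, V}(t)$. Choosing $t_n \in A$ with $t_n \to s$ and invoking the uniform Oka hypothesis on $A$ applied to the quintuple $(K, U_2, V, r, \epsilon)$,
\[ \liminf_{t \to s} \tau_{U_2, V}(t) \leq \liminf_n \tau_{U_2, V}(t_n) \leq \sup_{t \in A} \sigma(K, U_2, V, r, \epsilon)(X_t) < \infty. \]
The right-hand side is independent of $s$, so $\sup_{s \in \bar A} \tau_{U, V}(s)$ is finite. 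As this holds for every convex-$K$ quintuple, the family $\{X_t : t \in \bar A\}$ is uniformly Oka; in particular, for each fixed $s \in \bar A$ every $\tau(K, U, V, r, \epsilon)(X_s)$ is finite, so the $\tau$-analogue of Proposition \ref{p:newoka} yields that $X_s$ is Oka.

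The main subtlety is lining up the directions of the inequalities so that everything closes back on the original quintuple: $\tau$ is monotone opposite to the inclusion order in $U$, while Theorem \ref{t:semicontinuity2} requires a Stein $U_1$ strictly containing $U_2$. The nested choice $K \subset U_2 \Subset U_1 \subset U$ above creates enough room for the semicontinuity estimate and at the same time allows the bound obtained on $\tau_{U_2, V}$ over $A$ to be transferred back to $\tau_{U, V}$ over $\bar A$.
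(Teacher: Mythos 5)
Your proof is correct and follows the same route as the paper, which simply observes that the corollary is an immediate consequence of Theorem \ref{t:semicontinuity2} once one notes that $\sigma$ may be replaced by $\tau$ in Definition \ref{d:uniformlyoka}; you have merely made explicit the comparison inequalities between $\sigma$ and $\tau$ and the insertion of the Stein neighbourhoods $U_2\Subset U_1\subset U$ that the paper leaves to the reader.
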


The next result follows from Corollaries \ref{c:uniform-on-compacts} and \ref{c:closure}.

\begin{corollary}
\label{c:passing-to-limit}
Let $t_n\to 0$ in $B$ and suppose $X_{t_n}$ is Oka for all $n\in\N$.  Then $X_0$ is Oka if and only if the family $\{X_{t_n}:n\in\N\}$ is uniformly Oka. 
\end{corollary}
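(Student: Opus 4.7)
The plan is to deduce the corollary by applying the two preceding corollaries to the set $A=\{t_n:n\in\N\}$. Since $t_n\to 0$, the sequence together with its limit forms a compact subset of $B$, so $A$ is relatively compact with $\bar A = A\cup\{0\}$.

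For the ``only if'' direction, suppose $X_0$ is Oka. Then every point of the compact set $\bar A$ indexes an Oka fibre: the $t_n$'s by hypothesis, and $0$ by assumption. Applying Corollary \ref{c:uniform-on-compacts} to the compact set $\bar A$ shows that $\{X_t:t\in\bar A\}$ is uniformly Oka, and restricting this family to the subset $A\subset\bar A$ gives that $\{X_{t_n}:n\in\N\}$ is uniformly Oka.

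For the ``if'' direction, suppose $\{X_{t_n}:n\in\N\}$ is uniformly Oka. Since $A$ is relatively compact, Corollary \ref{c:closure} applies and yields that $X_t$ is Oka for every $t\in\bar A$. As $0\in\bar A$ (being the limit of the $t_n$'s), we conclude that $X_0$ is Oka.

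The main obstacle is essentially nonexistent here: the corollary is set up precisely so that the two preceding results fit together, one for each implication. The only small points to check are that $A$ is relatively compact and that $0\in\bar A$, both of which are immediate from $t_n\to 0$; I would mention these in a single sentence rather than spelling them out as separate steps.
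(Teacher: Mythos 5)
Your proof is correct and follows exactly the route the paper intends: the paper simply states that the corollary ``follows from Corollaries \ref{c:uniform-on-compacts} and \ref{c:closure},'' and your argument is precisely the fleshed-out version of that, applying the former to the compact set $\bar A=A\cup\{0\}$ for the ``only if'' direction and the latter to the relatively compact set $A$ for the ``if'' direction.
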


An examination of the proofs of Theorems \ref{t:semicontinuity1} and \ref{t:semicontinuity2} shows that for Corollary \ref{c:passing-to-limit} to hold, the Hermitian metrics on $X_{t_n}$ used to define what it means for the family $\{X_{t_n}:n\in\N\}$ to be uniformly Oka need not be the restrictions of a Hermitian metric on $X$ as assumed above.  It suffices that the metrics on $X_{t_n}$ converge to a Hermitian metric on the central fibre $X_0$ as $n\to\infty$.

It remains an open question whether $X_0$ must be Oka if $X_t$ is Oka for all $t\in B\setminus\{0\}$.  The following corollary of Theorems \ref{t:semicontinuity1} and \ref{t:semicontinuity2} explains how $X_0$ would fail to be Oka.  

\begin{corollary}
The central fibre $X_0$ is not Oka if and only if $\lim\limits_{t\to 0}\sigma_n^K(t)=\infty$ for some nonempty compact convex subset $K$ of $\C^k$ and every sufficiently large $n\in\N$.
\end{corollary}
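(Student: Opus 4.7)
The strategy is to derive both implications from Proposition \ref{p:K-Oka} together with the two semicontinuity theorems and the monotonicity of $\sigma(K,U,V,r,\epsilon)(Y)$ in its five arguments. Both Theorems \ref{t:semicontinuity1} and \ref{t:semicontinuity2} require Stein-ness of one of the neighbourhoods involved, so I choose the decreasing basis $(U_n)$ of neighbourhoods of $K$ to consist of convex, hence Stein, open sets; the balls $V_n$ are Stein automatically.

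For the direction ($\Leftarrow$), suppose some nonempty compact convex $K\subset\C^k$ and some $n_0$ satisfy $\lim_{t\to 0}\sigma_n^K(t)=\infty$ for all $n\geq n_0$. Fix such an $n$ and apply Theorem \ref{t:semicontinuity1} to the quintuples $(K,U_n,V_n,n,1/n)$ and $(K,U_n,V_{n+1},n,1/n)$, noting that $V_{n+1}$ is Stein. Combined with monotonicity (shrink $U$, expand $V$, increase $r$, decrease $\epsilon$), this yields
\[ \infty = \limsup_{t\to 0}\sigma_n^K(t) \leq \sigma(K,U_n,V_{n+1},n,1/n)(X_0) \leq \sigma_{n+1}^K(X_0), \]
so $\sigma_{n+1}^K(X_0)=\infty$, and by Proposition \ref{p:K-Oka} the fibre $X_0$ fails to be $K$-Oka, hence fails to be Oka.

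For ($\Rightarrow$), suppose $X_0$ is not Oka. By Proposition \ref{p:K-Oka} there are a nonempty compact convex $K$ and an $n_0$ with $\sigma_{n_0}^K(X_0)=\infty$; monotonicity then gives $\sigma_n^K(X_0)=\infty$ for every $n\geq n_0$, and the interlacing $\sigma_n^K\leq\tau_{n+1}^K$ recalled in the paper upgrades this to $\tau_{n+1}^K(X_0)=\infty$. Applying Theorem \ref{t:semicontinuity2} with $U_1=U_{n+1}$ (Stein by choice), $U_2=U_{n+2}$, $V=V_{n+1}$, $r=n+1$, $\epsilon=1/(n+1)$ produces
\[ \infty = \tau_{n+1}^K(X_0) \leq \liminf_{t\to 0}\tau(K,U_{n+2},V_{n+1},n+1,1/(n+1))(X_t) \leq \liminf_{t\to 0}\sigma_{n+2}^K(t), \]
the final step combining $\tau\leq\sigma$ with monotonicity. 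Since the relevant values lie in $[0,\infty]$, this $\liminf$ being infinite is equivalent to $\lim_{t\to 0}\sigma_{n+2}^K(t)=\infty$, which accordingly holds for every $n\geq n_0$, i.e.\ for all indices from $n_0+2$ on.

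The one real difficulty is bookkeeping: each invocation of a semicontinuity theorem and each translation between $\sigma$ and $\tau$ shifts quintuple indices by one, but since the condition in the statement concerns only \emph{sufficiently large} $n$, this cumulative finite loss is irrelevant.
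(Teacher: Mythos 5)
Your proof is correct and is exactly the derivation the paper intends: the corollary is stated there without proof as a consequence of Theorems \ref{t:semicontinuity1} and \ref{t:semicontinuity2}, and your argument supplies the missing details (Proposition \ref{p:K-Oka}, the interlacing $\sigma_n^K\leq\tau_{n+1}^K$, monotonicity in the quintuple, and one application of each semicontinuity theorem per direction). The only points to make explicit are the harmless normalisations you already note, namely choosing the $U_n$ convex with $U_{n+1}\Subset U_n$ and the balls $V_n$ with $V_n\Subset V_{n+1}$, so that the Stein and relative-compactness hypotheses of the two theorems are met.
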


\section{Families of tori}

\noindent
Let $\pi:X\to B$ be a family of compact complex manifolds.  It is an open question whether $\pi$ is an Oka map if all its fibres are Oka manifolds.

\begin{theorem}
\label{t:tori}
The projection of a family of complex tori is an Oka map.
\end{theorem}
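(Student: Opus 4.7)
The plan is to verify the convex approximation property for the map $\pi$: given a holomorphic $f:\C^k\to B$, a nonempty compact convex $K\subset\C^k$, and a holomorphic map $g$ from an open neighbourhood of $K$ into $X$ with $\pi\circ g=f$, approximate $g$ uniformly on $K$ by holomorphic maps $\tilde g:\C^k\to X$ lifting $f$. By the standard characterisation of Oka maps via such an approximation statement (together with its parametric version), this reduces the theorem to approximating holomorphic sections of the pulled-back family $X':=f^*X\to\C^k$, defined near $K$, by global holomorphic sections.

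Over the contractible Stein base $\C^k$, the family $X'$ admits a very concrete description. Let $n$ denote the complex dimension of the fibres. Since $\C^k$ is simply connected, the local system $R^1\pi'_*\mathbb{Z}$ is trivial (of rank $2n$), and together with the Hodge filtration this identifies the relative Jacobian of $X'$, i.e., the associated family of complex Lie groups, with a quotient $E/\Lambda$, where $E\to\C^k$ is a holomorphic rank-$n$ vector bundle and $\Lambda\subset E$ is a holomorphically varying lattice with constant underlying local system. By Oka--Grauert and the contractibility of $\C^k$, $E$ is holomorphically trivial. The family $X'$ itself is a torsor for $E/\Lambda$, classified by $H^1(\C^k,\mathcal J)$, where $\mathcal J$ is the sheaf of holomorphic sections of $E/\Lambda\to\C^k$. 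The short exact sequence $0\to\Lambda\to\mathcal O(E)\to\mathcal J\to 0$ gives
\[ H^1(\C^k,\mathcal O(E))\to H^1(\C^k,\mathcal J)\to H^2(\C^k,\Lambda), \]
in which the left term vanishes by Cartan B and the right by the contractibility of $\C^k$. Hence $H^1(\C^k,\mathcal J)=0$, the torsor is trivial, and $X'\cong(\C^k\times\C^n)/\Lambda$; the quotient $\C^k\times\C^n\to X'$ is a holomorphic covering.

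The section $g$ of $X'$ over a neighbourhood of $K$ now lifts, by the simple-connectedness of a sufficiently small neighbourhood of $K$, to a holomorphic map into $\C^k\times\C^n$ of the form $z\mapsto(z,h(z))$. Approximating $h$ uniformly on $K$ by a holomorphic map $\tilde h:\C^k\to\C^n$ (Oka--Weil/Runge), the formula $z\mapsto(z,\tilde h(z))$ defines a global holomorphic section of $\C^k\times\C^n\to\C^k$; composing with the covering produces a holomorphic section of $X'\to\C^k$ approximating $g$ on $K$. This is exactly a holomorphic $\tilde g:\C^k\to X$ with $\pi\circ\tilde g=f$ and $\tilde g\approx g$ on $K$.

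The main obstacle is the structural step in the second paragraph, namely writing $X'$ as $E/\Lambda$ with $E$ trivial, or equivalently showing that a family of complex tori over a simply connected Stein base admits a holomorphic section. This is where the special nature of tori enters essentially, through the combination of the relative Jacobian's group-scheme structure with the cohomological vanishings coming from Cartan B and contractibility. Once this is in place, the lifting across a holomorphic covering and the Runge approximation of $\C^n$-valued functions are entirely routine, and the parametric version of the approximation property needed for the full Oka property of $\pi$ is obtained by running the same argument with a Stein parameter space appended.
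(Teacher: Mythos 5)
Your structural analysis of the pulled-back family is correct and is the genuinely interesting part of your argument: over $\C^k$ the associated family of complex Lie groups $E/\Lambda$ (the relative Albanese rather than the Jacobian, but the object you describe is the right one) acts on $X'=f^*X$, the torsor class in $H^1(\C^k,\mathcal J)$ is killed between $H^1(\C^k,\mathcal O(E))=0$ (Cartan B) and $H^2(\C^k,\Lambda)=0$ (contractibility), and $X'\cong(\C^k\times\C^n)/\Lambda$ with the quotient map a covering; the covering-space lift plus Runge then gives the basic approximation statement you claim. This is a different route from the paper, which instead invokes the Kodaira--Spencer universal family $p:\mathscr X\to\mathscr B$ of $n$-dimensional tori, $\mathscr X=(\C^n\times\mathscr B)/\mathbb Z^{2n}$ over the matrices $T$ with $\det\Im T>0$, observes that $p$ carries the dominating fibre spray $([z,T],w)\mapsto[z+w,T]$ and is therefore elliptic and hence Oka, and finishes by pulling back along local classifying maps and applying the localization theorem for Oka maps.

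The genuine gap is in your first and last paragraphs. An Oka map is by definition a Serre fibration (automatic here by Ehresmann) satisfying the \emph{parametric} Oka property with approximation and interpolation for liftings from arbitrary Stein sources; there is no standard characterisation reducing this to the basic convex approximation property for liftings over $\C^k$ that you can simply cite --- the CAP-implies-Oka theorem is a statement about manifolds, and the paper's Section 2 develops it only in that setting. The parametric property is a homotopy-theoretic statement (continuous liftings must be deformable to holomorphic ones, continuously in a compact parameter, fixing those already holomorphic), and ``running the same argument with a Stein parameter space appended'' does not produce it from a covering-space lift; in particular your identification of $X'$ with a quotient of a trivial bundle uses contractibility of $\C^k$ and is unavailable over a general Stein source. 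The repair is close at hand and reconnects with the paper: the translation action of $E/\Lambda$ on $f^*X$ yields a dominating fibre spray on the vector bundle $(\pi')^*E$, namely $(x,e)\mapsto x+[e]$, so each pullback --- indeed $\pi$ itself, locally over $B$ --- is an elliptic submersion; one then concludes with the Gromov--Forstneri\v c Oka principle for elliptic submersions and the localization theorem, which is exactly the paper's endgame.
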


\begin{proof}
Let $\mathscr B$ be the space of $n\times n$ complex matrices $T$ with $\det\Im T>0$.  Let $\mathscr X$ be the quotient of $\C^n\times\mathscr B$ by the free and properly discontinuous action of $\mathbb Z^{2n}$ given by the formula
\[ m\cdot (z,T) = (z+m\cdot \left[ \begin{array}{c} I \\ T \end{array} \right] , T), \]
with the induced projection $p:\mathscr X\to\mathscr B$.  It is well known that the family $p$ is complete and effective and contains every $n$-dimensional torus (\cite{Kodaira-Spencer}; see also \cite{Birkenhake-Lange}, \S 7.3).  In other words, $p$ is a minimal versal deformation of every $n$-dimensional torus.

Next observe that $p$ has a dominating fibre spray $\sigma$ defined on the trivial vector bundle $\mathscr X\times\C^n$ over $\mathscr X$ by the formula
\[ \sigma([z,T],w)=[z+w,T]. \]
Hence $p$ is elliptic and thus Oka.

Finally, let $\pi:X\to B$ be a family of $n$-dimensional tori.  Let $t\in B$.  By completeness of the family $p$, there is an open neighbourhood $U$ of $t$ in $B$ and a holomorphic map $f:U\to\mathscr B$ such that the restriction $\pi | \pi^{-1}(U)\to U$ is isomorphic to the pullback family $f^*p$.  Since pullbacks of Oka maps are Oka, we conclude that $\pi$ is Oka over a neighbourhood of each point in $B$.  Hence $p$ is Oka (\cite{Forstneric2}, Theorem 4.7).
\end{proof}

We remark that a holomorphic map with noncompact Oka fibres need not be Oka.  There is a smoothly trivial holomorphic submersion which is not Oka but all of whose fibres are isomorphic to $\C^*$ (\cite{Forstneric-Larusson}, Example 6.6).

\section{The noncompact case}

\noindent
The definitions and results in Sections 2 and 3 can be extended to the noncompact case.  First, let $Y$ be a complex manifold with a complete Hermitian metric $\omega$.  By a {\it sextuple} we mean a sextuple $(K,U,V,r,\epsilon,L)$, where $(K,U,V,r,\epsilon)$ is a quintuple as defined above, and $L$ is a relatively compact subset of $Y$.  Let
\[ \sigma(K, U, V, r, \epsilon, L)(Y)=\sup_{\substack{f:U\to Y \textrm{ hol.} \\ \lVert f^*\omega\rVert_U\leq r \\ f(U)\subset L}} \inf_{\substack{g:V\to Y \textrm{ hol.} \\ d(f,g)<\epsilon \textrm{ on }K }} \lVert g^*\omega \rVert_V. \]
We define $\tau(K, U, V, r, \epsilon, L)(Y)$ similarly, with the weak and the strong inequality signs interchanged.  As before, we can show that $Y$ is Oka if and only if $\sigma(K, U, V, r, \epsilon, L)(Y)$, or equivalently $\tau(K, U, V, r, \epsilon, L)(Y)$, is finite for every septuple $(K,U,V,r,\epsilon,L)$ such that $K$ is convex.

Next, let $\pi:X\to B$ be a holomorphic submersion from a complex manifold $X$ onto a complex manifold $B$.  Fix a base point $0$ in $B$.  Write $X_t=\pi^{-1}(t)$ for $t\in B$.  Choose a complete Hermitian metric $\omega$ on $X$.  Let $L$ be a subset of $X$ such that $L\cap X_t$ is relatively compact for every $t\in B$.  We define $\sigma(t)$ and $\tau(t)$ using sextuples $(K,U,V,r,\epsilon,L\cap X_t)$ and the metric $\omega|X_t$.  With minor modifications, the proofs of Theorems \ref{t:semicontinuity1} and \ref{t:semicontinuity2} show that 
\[\limsup_{t\to 0}\sigma_{U,V_1}(t) \leq \sigma_{U,V_2}(0) \]
if $V_1\Subset V_2$ and $L$ is closed, and
\[\tau_{U_1,V}(0)\leq\liminf_{t\to 0}\tau_{U_2,V}(t) \]
if $U_2\Subset U_1$ and $L$ is open.  Using an exhaustion of $X$ by compact subsets $L$, the proof of Corollary \ref{c:gdelta} yields the following generalisation.

\begin{theorem}  
\label{t:general-gdelta}
Let $\pi:X\to B$ be a holomorphic submersion.  The set of $t\in B$ for which $X_t$ is Oka is $G_\delta$.
\end{theorem}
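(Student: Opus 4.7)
The plan is to follow the proof of Corollary~\ref{c:gdelta}, with the extra parameter $L$ handled by a compact exhaustion of the total space $X$. First, I would fix a compact exhaustion $L_1\Subset L_2\Subset\cdots$ of $X$ with $\bigcup_m L_m=X$ and each $L_m$ contained in the interior of $L_{m+1}$. For every $t\in B$, the intersections $L_m\cap X_t$ then form a compact exhaustion of the fibre $X_t$, so any relatively compact $L'\subset X_t$ satisfies $\overline{L'}\subset L_m\cap X_t$ for all sufficiently large $m$. Since $\sigma(K,U,V,r,\epsilon,L)(X_t)$ is monotone in $L$ (enlarging $L$ enlarges the family of admissible $f$), the noncompact characterisation of the Oka property recalled in Section~4 yields
\[ X_t\text{ is Oka}\iff \sigma(K,U,V,r,\epsilon,L_m\cap X_t)(X_t)<\infty \]
for every $m\in\N$ and every quintuple $(K,U,V,r,\epsilon)$ with $K$ convex.

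Next, I would enumerate a countable family of quintuples as in the proof of Corollary~\ref{c:gdelta}: take $r$ and $1/\epsilon$ to be positive integers, $V$ a ball of integer radius centred at the origin, and sandwich each convex $K$ between the convex hull of a finite set of rational-coordinate points and the interior of a slightly larger such hull. Combining this enumeration with the discrete index $m\in\N$ gives a countable collection $\mathcal S$ of sextuples such that
\[ O_\pi=\bigcap_{s\in\mathcal S}\{t\in B:\sigma(s)(t)<\infty\}. \]

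Third, for each fixed choice of $K,U,r,\epsilon,m$, I would pick an increasing sequence $V_1\Subset V_2\Subset\cdots$ of Stein open neighbourhoods of $U$ in $\C^k$ and set $\sigma_n(t)=\sigma(K,U,V_n,r,\epsilon,L_m\cap X_t)(X_t)$. The noncompact semicontinuity statement from Section~4 (which applies because $L_m$ is closed) gives $\limsup_{t\to t_0}\sigma_n(t)\leq \sigma_{n+1}(t_0)$, so there is an open set $W_n$ with $\{\sigma_{n+1}<\infty\}\subset W_n\subset\{\sigma_n<\infty\}$. Hence $\bigcap_n\{\sigma_n<\infty\}=\bigcap_n W_n$ is $G_\delta$. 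Taking a further countable intersection over the elements of $\mathcal S$ preserves the $G_\delta$ property and yields the theorem.

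The main bookkeeping hurdle is the reduction carried out in the first two paragraphs: one must confirm that controlling $\sigma$ only for the compact sets $L_m$, rather than for every relatively compact $L\subset X_t$, genuinely captures Oka-ness of $X_t$. This uses both the monotonicity of $\sigma$ in $L$ and the fact that a compact exhaustion of $X$ restricts to a compact exhaustion of each fibre. Once this is in place, the remainder of the argument is an essentially verbatim repetition of the semicontinuity-plus-$G_\delta$ scheme of Corollary~\ref{c:gdelta}.
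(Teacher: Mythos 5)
Your proposal is correct and takes essentially the same route as the paper: the paper's entire proof of this theorem is the single remark that an exhaustion of $X$ by compact subsets $L$, fed into the proof of Corollary~\ref{c:gdelta}, yields the result, and your argument is a faithful expansion of exactly that, including the monotonicity-in-$L$ reduction to a countable family of sextuples and the reuse of the noncompact semicontinuity statement for closed $L$. (The only quibble is that the noncompact material you cite is in Section~5, not Section~4.)
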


Let $A$ be a relatively compact subset of $B$.  We generalise Definition \ref{d:uniformlyoka} and say that the family $\{X_t:t\in A\}$ is uniformly Oka if $\sup\limits_{t\in A} \sigma(t)<\infty$ for every sextuple $(K, U, V, r, \epsilon,L)$ with $K$ convex and $L$ relatively compact in $X$.  Clearly, an equivalent definition results from replacing $\sigma$ by $\tau$, or from requiring $L$ to be compact or to be open.  The definition appears to depend on the choice of $\omega$, but Corollaries \ref{c:uniform-on-compacts} and \ref{c:closure}, whose proofs still go through, show that in fact it does not.  We have the following generalisation of Corollary \ref{c:passing-to-limit}.

\begin{theorem}  Let $\pi:X\to B$ be a holomorphic submersion.  Let $t_n\to 0$ in $B$ and suppose $X_{t_n}$ is Oka for all $n\in\N$.  Then $X_0$ is Oka if and only if the family $\{X_{t_n}:n\in\N\}$ is uniformly Oka.
\end{theorem}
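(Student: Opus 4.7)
The plan is to mimic the proof of Corollary \ref{c:passing-to-limit}, invoking the noncompact versions of Corollaries \ref{c:uniform-on-compacts} and \ref{c:closure}, which, as the paper remarks, follow from the noncompact semicontinuity inequalities by the same arguments as in the compact case.

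I would first spell out those two corollaries in the sextuple setting. The noncompact analogue of Corollary \ref{c:uniform-on-compacts} asserts that if $A\subset B$ is compact and $X_t$ is Oka for every $t\in A$, then $\{X_t:t\in A\}$ is uniformly Oka. To prove it, fix a sextuple $(K,U,V,r,\epsilon,L)$ with $K$ convex; replacing $L$ by $\bar L$ (which only increases $\sigma$) we may assume $L$ is closed. For each $t_0\in A$, pick a Stein open $V'\Supset V$ and apply the noncompact version of Theorem \ref{t:semicontinuity1}: since $X_{t_0}$ is Oka, $\sigma_{U,V'}(t_0)<\infty$, so $\sigma_{U,V}$ is bounded on some neighbourhood of $t_0$. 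A finite subcover of $A$ then yields a uniform bound. The noncompact analogue of Corollary \ref{c:closure} is obtained dually: for a relatively compact $A$ with $\{X_t:t\in A\}$ uniformly Oka, fix a sextuple, shrink $L$ to $L^\circ$ (which only decreases $\tau$), and apply the noncompact version of Theorem \ref{t:semicontinuity2} at each $t_0\in\bar A$, using the uniform bound over $A$ to bound $\tau_{U_1,V}(t_0)$.

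With these two tools in hand, the theorem is immediate. For the ``only if'' direction, suppose $X_0$ is Oka. Since $t_n\to 0$, the set $A=\{t_n:n\in\N\}\cup\{0\}$ is compact and every fibre over $A$ is Oka, so the first tool makes $\{X_t:t\in A\}$, and hence its subfamily $\{X_{t_n}:n\in\N\}$, uniformly Oka. Conversely, if $\{X_{t_n}:n\in\N\}$ is uniformly Oka, then this set is relatively compact and $0$ lies in its closure, so the second tool forces $X_0$ to be Oka.

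The main point requiring attention is the asymmetric role of $L$: the $\sigma$-inequality needs $L$ closed, while the $\tau$-inequality needs $L$ open. Both conditions can be imposed without loss of generality by passing to $\bar L$ or $L^\circ$, so this is bookkeeping rather than a genuine obstacle. The real substance of the argument lies in the noncompact semicontinuity theorems, which the preceding discussion has already supplied.
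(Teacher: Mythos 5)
Your proposal is correct and follows essentially the same route as the paper, which presents this theorem as an immediate consequence of the noncompact analogues of Corollaries \ref{c:uniform-on-compacts} and \ref{c:closure} (whose proofs, as you note, go through via the sextuple versions of Theorems \ref{t:semicontinuity1} and \ref{t:semicontinuity2}), exactly as Corollary \ref{c:passing-to-limit} follows from them in the compact case. Your handling of the closed-versus-open $L$ asymmetry is the same bookkeeping the paper implicitly sanctions when it remarks that requiring $L$ to be compact or open gives an equivalent definition of uniformly Oka.
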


One might expect Theorem \ref{t:general-gdelta} to be far from optimal, due to the seemingly primitive approach of treating the Oka property as the conjunction of a countable number of unrelated properties.  However, for submersions that are not necessarily proper, it is easily seen that Theorem \ref{t:general-gdelta} is in fact sharp.

\begin{proposition}
Let $B$ be a complex manifold and $A\subset B$ be $G_\delta$.  There is a holomorphic submersion $\pi:X\to B$ such that the set of $t\in B$ for which $\pi^{-1}(t)$ is Oka is precisely $A$.
\end{proposition}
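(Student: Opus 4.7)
The plan is to realise $A$ as the Oka-fibre locus of a disjoint-union submersion whose pieces are simple open subsets of $B\times\C$, one for each closed set in an $F_\sigma$-decomposition of $B\setminus A$.  Since $A$ is $G_\delta$, I would first write $A=\bigcap_{n\in\N} U_n$ with each $U_n\subset B$ open, and set $F_n=B\setminus U_n$.  Each $F_n$ is then closed in $B$, and $B\setminus A=\bigcup_{n\in\N} F_n$.

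For each $n$ I would put
\[ X_n = (B\times\C)\setminus(F_n\times\{0,1\}) \]
and let $\pi_n:X_n\to B$ be the restriction of the first projection.  Since $F_n\times\{0,1\}$ is closed in $B\times\C$, $X_n$ is an open complex submanifold of $B\times\C$ and $\pi_n$ is a surjective holomorphic submersion.  The fibre over $t\notin F_n$ is $\C$, which is Oka, whereas the fibre over $t\in F_n$ is $\C\setminus\{0,1\}\cong\mathbb{P}^1\setminus\{0,1,\infty\}$, which is Kobayashi hyperbolic; by Picard's theorem every holomorphic map $\C\to\C\setminus\{0,1\}$ is constant, so the convex approximation property already fails at $k=1$ for any nonconstant holomorphic map on a small disk into $\C\setminus\{0,1\}$, and this fibre is not Oka.

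I would then set $X=\bigsqcup_{n\in\N} X_n$ with the induced projection $\pi:X\to B$, which is again a holomorphic submersion of complex manifolds.  Since $\pi^{-1}(t)=\bigsqcup_n\pi_n^{-1}(t)$ and a countable disjoint union of complex manifolds is Oka if and only if every component is Oka, the fibre $\pi^{-1}(t)$ is Oka precisely when $t\notin F_n$ for every $n$, i.e.\ precisely when $t\in A$, as required.

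The only step that is not purely formal is the characterisation of the Oka property for countable disjoint unions.  Both directions follow from the remark that a holomorphic map from a connected compact convex $K\subset\C^k$ into $\bigsqcup_n Y_n$ factors through one $Y_m$; using a Hermitian metric on $\bigsqcup_n Y_n$ that places distinct components at infinite distance, any sufficiently close entire approximation $\C^k\to\bigsqcup_n Y_n$ is likewise forced by connectedness of $\C^k$ to land in the same $Y_m$.  I expect this bookkeeping to be the only potential hurdle in writing the proof, and it is standard.
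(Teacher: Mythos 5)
Your construction is correct and shares the paper's core idea---write $B\setminus A=\bigcup_n F_n$ with $F_n$ closed, keep the fibre $\C$ over $A$, and puncture two points out of the fibre over each $F_n$ so that Picard's theorem kills the convex approximation property there---but the final assembly is genuinely different. The paper combines all the conditions inside a \emph{single} open subset of $B\times\C$ by deleting $E=\bigcup_n F_n\times\{n,n+1\}$: placing the punctures for $F_n$ at heights $n$ and $n+1$ makes the family $\{F_n\times\{n,n+1\}\}$ locally finite, hence $E$ closed, and every fibre is then a connected planar domain, equal to $\C$ over $A$ and to $\C$ minus at least two points otherwise. You instead take a countable disjoint union of the manifolds $(B\times\C)\setminus(F_n\times\{0,1\})$, which forces you to prove the auxiliary lemma that a countable disjoint union of complex manifolds is Oka if and only if each component is. That lemma is true under the paper's formulation of the Oka property as the convex approximation property (a map from a connected neighbourhood of a convex compact lands in one component, and any sufficiently good entire approximant is trapped in the same component), but it does make your fibres disconnected and leans on the convention that Oka manifolds need not be connected; the paper's height trick buys connected fibres and avoids the disjoint-union lemma altogether, at no extra cost. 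Both arguments are sound; yours is a touch longer and convention-sensitive, the paper's a touch slicker.
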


\begin{proof}
Write $B\setminus A$ as a countable union $\bigcup\limits_{n=1}^\infty F_n$ of closed sets (we may assume the union is infinite, as there is no harm in listing a set more than once).  The subset $E=\bigcup\limits_{n=1}^\infty F_n\times\{n,n+1\}$ of $B\times \C$ is closed.  The intersection $E\cap(\{t\}\times\C)$ contains at least two points if $t\in B \setminus A$, but is empty if $t\in A$.  Hence the projection $(B\times\C)\setminus E\to B$ has the required property.
\end{proof}

\newpage

\end{document}